\documentclass{article}
\usepackage{authblk}
\usepackage{etex}
\usepackage[utf8]{inputenc}
\usepackage[english]{babel}
\usepackage{amsmath}
\usepackage{amsthm}
\usepackage{amssymb}
\usepackage{sectsty}
\usepackage{titlesec}
\usepackage{color}
\usepackage[color,matrix,arrow]{xy}
\usepackage{amsgen}
\usepackage{amstext}
\usepackage{amsbsy}
\usepackage{amsopn}
\usepackage{amsfonts}
\usepackage{eepic}
\usepackage{graphicx}
\usepackage{epsf}
\usepackage{pstricks}
\usepackage{float}
\usepackage{enumerate}
\usepackage{eqnarray}
\usepackage{faktor}
\xyoption{all}
\usepackage{hyperref}
\usepackage{thmtools}
\usepackage{thm-restate}

\usepackage{pgf}
\usepackage{tikz-cd}
\usetikzlibrary{automata, arrows.meta, positioning,decorations.pathmorphing}

\newcommand{\footrecall}[1]{%
} 
\usetikzlibrary{snakes,shapes,arrows,automata}

\titleformat*{\section}{\large\bfseries}
\titleformat*{\subsection}{\normalsize \bfseries}

	\newtheorem*{classes}{Theorem \ref{thm:classes}}
	\newtheorem*{density}{Theorem \ref{thm:density-one}}
\newtheorem{theorem}{Theorem}[section]
\newtheorem{proposition}[theorem]{Proposition}
\newtheorem{corollary}[theorem]{Corollary}
\newtheorem{lemma}[theorem]{Lemma}
\newtheorem{conjecture}[theorem]{Conjecture}
\newtheorem{question}[theorem]{Question}
\theoremstyle{definition}
\newtheorem{definition}[theorem]{Definition}

\theoremstyle{remark}
\newtheorem{remark}[theorem]{Remark}

\DeclareMathOperator{\ordT}{ord}

\newcommand{\N}{\mathbb N}
\newcommand{\Z}{\mathbb Z}
\newcommand{\Q}{\mathbb Q}

\newcommand{\floor}[1]{\left\lfloor #1\right\rfloor}
\newcommand{\fracpart}[1]{\left\{#1\right\}}
\newcommand{\T}{\mathcal T}
\newcommand{\Rset}{\mathcal R}

\title{Rational dynamics of a  prime-representing map}
\author{André Carvalho}
\affil{Research Center in Mathematics and Applications (CIMA)

	Department of Mathematics, School of Sciences and Technology of the University of \'Evora
	
	Rua Rom\~ao Ramalho, 59, 7000-671 \'Evora, Portugal
	
	\texttt{andrecruzcarvalho@gmail.com}\\}
\date{}

\begin{document}
	\maketitle
	
	\begin{abstract}
	 We study the rational dynamics of the map $\mathcal{T}(x)=\lfloor x\rfloor(1+\{x\})$,
	 which appears in the recursive construction of the prime-representing constant of
	 Fridman, Garbulsky, Glecer, Grime and Florentin. For a rational number $x\geq 2$
	 with denominator $M$, we define its order to be the least non-negative integer $n$ such
	 that $\mathcal{T}^n(x)$ is an integer, if such an $n$ exists, and ask whether every
	 rational number has finite order.
	 
	 For each \(n\), we prove that the reduced fractions \(a/M\) of exact order
	 \(n\) are described by residue classes of \(a\) modulo \(M^{n+1}\), and give a recurrence for the number $A(n,M)$ of
	 residue classes of exact order $n$. We then show that for each fixed
	 denominator the fractions of finite order have natural density one among all reduced
	 fractions with that denominator, which implies in particular that there is no infinite
	 arithmetic progression of rational numbers of infinite order. We also give an explicit
	 family of fractions of prescribed order for every denominator, and fully characterize
	 the case $M=2$.  
	\end{abstract}
	
	\section{Introduction}
	It was shown in \cite{FGGGF} that the recursive formula 
	\[
	f_{n+1}=\floor{f_n}\bigl(f_n-\floor{f_n}+1\bigr)
	\]
	can be used to generate all prime numbers in the following sense: there is an irrational number $x$ such that, if we put $f_1=x$, then the integer part of $f_n$ is the $n$-th prime.
	
	It is clear that if $f_n$ is an integer or  $f_n\leq 2$, then the sequence defined by the recursion stabilizes, that is $f_i=f_n$, for all $i\geq n$.
	Moreira pointed out in \cite{Moreira} that it is not hard to see that if the sequence  does not stabilize, then $f_1$ is irrational, but posed the question of whether it would always stabilize if $f_1$ is rational. That is supported by computational evidence.

		Let
	\[
	\T(x)=\floor{x}\bigl(1+\fracpart{x}\bigr),
	\]
	where $\fracpart{x}=x-\floor{x}$. So, if $f_n=x$, then $f_{n+1}=\T(x)$.

\begin{conjecture}\label{conj:global}
	Every rational number $x\in\Q_{\geq 2}$ has finite order under $\T$.
\end{conjecture}
	
	The present note concerns the rational dynamics of $\T$.  If $x\in\Q_{\geq 2}$,
	define
	\[
	\ordT(x)=\min\{n\geq 0: \T^n(x)\in\N\},
	\]
	with $\ordT(x)=\infty$ if no such $n$ exists.  The restriction $x\geq 2$ avoids the
	trivial fixed interval $[1,2)$, since if $1\leq x<2$ then $\T(x)=x$.

	We do not prove Conjecture~\ref{conj:global}.  Instead, we prove that for each
	fixed denominator $M$, the set of possible counterexamples has density zero.  This
	is directly analogous to the results of Azevedo, Carvalho and Machiavelo for the
	map $x\mapsto x\lceil x\rceil$ \cite{ACM}.

	To do so, we show that, for a fixed denominator $M$ and a rational number $a/M$ with $(a,M)=1$ and $a\geq 2M$, having order $n$, depends only on $a\bmod M^{n+1}$. Concretely, let \(M> 1\) and \(n> 0\).  
	We define $\Rset_{n,M}$ as the (eventually empty) set of integers $k$ such that $0\leq k< M^{n+1}$, $(k,M)=1$ and  for every integer \(a\geq 2M\) with \(
	a\equiv_{M^{n+1}} k,
	\)
	one has \(\ordT(a/M)=n\). We write
	\[
	A(n,M)=\#\Rset_{n,M}
	\qquad(M>1,\ n\geq 1).
	\]

We prove the following theorem:

	\begin{classes}
			Let \(M>1\) and \(n\geq 1\).  For every integer \(a\geq 2M\) with
		\((a,M)=1\),
		\[
		\ordT(a/M)=n
		\quad\Longleftrightarrow\quad
		a\bmod M^{n+1}\in \Rset_{n,M}.
		\]
		Moreover,
		\begin{equation*}
			A(n,M)
			=
			\varphi(M)\sum_{d\mid M} A(n-1,d)
			\left(\frac Md\right)^{n-1}.
		\end{equation*}
	\end{classes}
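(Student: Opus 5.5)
The plan is to prove both parts simultaneously by induction on $n$, using a single explicit computation of one step of $\T$ on a reduced fraction. For the base of the recursion we use the convention $A(0,1)=1$ and $A(0,d)=0$ for $d>1$, with $\Rset_{0,1}=\{0\}$. This matches the fact that a reduced fraction with denominator $d>1$ is never an integer.

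\textbf{One step of $\T$.} Write $a=qM+r$ with $0<r<M$, so that $(r,M)=1$ and $q=\floor{a/M}\geq 2$. Then
\[
\T(a/M)=q\Bigl(1+\frac rM\Bigr)=\frac{q(M+r)}{M}.
\]
Since $(M+r,M)=(r,M)=1$, we get $\gcd(q(M+r),M)=\gcd(q,M)=:g$. Setting $d=M/g$, the reduced form of $\T(a/M)$ is $a'/d$ with $a'=(q/g)(M+r)$. Moreover $\T(x)\geq x$ for $x\geq 1$, because $q\{x\}\geq\{x\}$. Hence $a'/d\geq 2$, i.e.\ $a'\geq 2d$. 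It follows that $\ordT(a/M)=n\geq 1$ holds if and only if $\ordT(a'/d)=n-1$.

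\textbf{Dependence on $a\bmod M^{n+1}$.} By induction, $\ordT(a'/d)=n-1$ depends only on $a'\bmod d^{n}$. For $n-1=0$ this is trivial: the condition is just $d=1$. Knowing $a\bmod M^{n+1}$ determines $r$ and $q\bmod M^n$. In particular it determines $g=\gcd(q,M)$, hence $d$, and it determines $q/g\bmod M^n/g$. Since $d^n=M^n/g^n$ divides $M^n/g$, the residue $a'\bmod d^n$ is determined as well. Thus whether $\ordT(a/M)=n$ depends only on the class of $a$ modulo $M^{n+1}$. The set of classes of order $n$ is therefore exactly $\Rset_{n,M}$, which gives the equivalence.

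\textbf{Counting.} A class modulo $M^{n+1}$ coprime to $M$ corresponds to a pair ($r$ coprime to $M$, $q\bmod M^n$). There are $\varphi(M)$ choices of $r$. Fix $r$ and a divisor $d\mid M$, and put $g=M/d$. The residues $q$ with $\gcd(q,M)=g$ are exactly $q=gq'$ with $q'\bmod M^n/g$ and $(q',d)=1$. The map $q'\mapsto q'(M+r)\bmod d^n$ is reduction modulo $d^n$ followed by multiplication by a unit, because $(M+r,d)=1$. It therefore maps the residues $q'\bmod M^n/g$ onto the residues mod $d^n$, uniformly with fibres of size $(M^n/g)/d^n=g^{n-1}=(M/d)^{n-1}$, and it preserves coprimality to $d$. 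So the number of $q'$ with $a'\bmod d^n\in\Rset_{n-1,d}$ is $A(n-1,d)(M/d)^{n-1}$. Summing over $d\mid M$ and multiplying by $\varphi(M)$ gives the recurrence.

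\textbf{Main obstacle.} The delicate point is the bookkeeping of moduli. We must check that $q/g$ is known only modulo $M^n/g$, and that this is still enough to pin down $a'$ modulo $d^n$. We must also handle the boundary convention at $n-1=0$ consistently with the definition of $\Rset$ for $n\geq1$.
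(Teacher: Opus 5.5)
Your proof is correct and follows essentially the same route as the paper. It uses induction on $n$ via the one-step reduction $a/M\mapsto a'/d$ with $d=M/\gcd(q,M)$, the divisibility $d^n\mid M^n/g=d^n g^{n-1}$ showing that $a\bmod M^{n+1}$ determines $a'\bmod d^n$, and a count over $g\mid M$ in which multiplication by the unit $M+r$ permutes $(\Z/d^n\Z)^\times$ with fibres of size $g^{n-1}$. The only (cosmetic) difference is that you start the induction at $n-1=0$ via the conventions $A(0,1)=1$, $A(0,d)=0$, so $n=1$ and its recurrence are handled uniformly, whereas the paper proves $n=1$ separately by identifying $\Rset_{1,M}=\{k:1\leq k<M,\ (k,M)=1\}$ directly.
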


We then prove some consequences of this, such as fully characterizing the case $M=2$ and showing that all orders occur for every denominator by constructing an explicit
family of fractions of prescribed order for every denominator.

Let $A\subseteq \Z_{>0}$. The \emph{natural density} of $A$ is
\[
\delta(A)=\lim_{N\to\infty}\frac{\#\{a\in A: a\leq N\}}{N},
\]
if this limit exists. 
Let $B\subseteq \Z_{>0}$ have positive natural density, and let 
$A\subseteq B$. The \emph{natural density of $A$ relative to $B$} is
\[
\delta_B(A)=\lim_{N\to\infty}
\frac{\#\{a\in A: a\leq N\}}{\#\{b\in B: b\leq N\}},
\]
if this limit exists.  

We will show that, for a fixed denominator $M$, the set of numerators $a\geq 2M$ with $(a,M)=1$ for which $a/M$ has finite order has density $1$ relative to the set of possible numerators $b/M$ with $(b/M)=1$. This shows in some sense, that the probability of a rational number having finite order is $1$, providing some more evidence supporting Conjecture \ref{conj:global}.

\begin{density}
	For every $M\geq 1$,
	\[
	\sum_{n= 0}^{+\infty}\frac{A(n,M)}{\varphi(M^{n+1})}=1.
	\]
	Equivalently, among the integers $a\geq 2M$ with $(a,M)=1$, the set of $a$ for
	which $\ordT(a/M)<\infty$ has relative natural density \(1\) inside \(B_M=\{a\geq1:(a,M)=1\}\).
\end{density}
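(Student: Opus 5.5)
The plan is to normalize the counts into probabilities and read the recurrence of Theorem~\ref{thm:classes} as a Markov chain on the divisors of $M$, which is absorbed at $d=1$. For $M\ge1$ and $n\ge0$ put
\[
p_n(M)=\frac{A(n,M)}{\varphi(M^{n+1})}.
\]
By Theorem~\ref{thm:classes}, $p_n(M)$ is exactly the proportion of reduced residues modulo $M^{n+1}$ whose lift $a\ge 2M$ has order $n$. I will use the conventions implicit in the recurrence: $A(0,1)=1$ and $A(0,d)=0$ for $d>1$, since order $0$ means the number is an integer, and $A(n,1)=0$ for $n\ge1$. The first thing to check is that these conventions really make the recurrence correct at $n=1$. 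I expect this bookkeeping, rather than any analytic step, to be the main obstacle, because the case $M=1$ has to be kept out of the recurrence.

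Since $\varphi(M^{n+1})=M^n\varphi(M)$ and $\varphi(d^n)=d^{n-1}\varphi(d)$, dividing the recurrence by $\varphi(M^{n+1})$ gives, for $M>1$ and $n\ge1$,
\[
p_n(M)=\sum_{d\mid M}\frac{\varphi(d)}{M}\,p_{n-1}(d).
\]
The weights satisfy $\sum_{d\mid M}\varphi(d)/M=1$, so they form a probability distribution on the divisors of $M$. Consequently $p_n(M)$ is the probability that the chain started at $M$ first reaches $1$ at time exactly $n$. Here the chain jumps from a state $m>1$ to a divisor $d$ with probability $\varphi(d)/m$, and $1$ is absorbing.

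Next define the tail $q_N(M)=1-\sum_{n=0}^{N}p_n(M)$. Summing the identity above over $n\le N$ gives $q_N(1)=0$ for all $N$ and, for $M>1$,
\[
q_N(M)=\frac{\varphi(M)}{M}\,q_{N-1}(M)+\sum_{\substack{d\mid M\\ 1<d<M}}\frac{\varphi(d)}{M}\,q_{N-1}(d),
\]
with $q_0(M)=1$. I will prove $q_N(M)\to0$ by strong induction on $M$. The inductive hypothesis makes the second term some $\varepsilon_N\to0$. The self-loop coefficient is $c=\varphi(M)/M<1$ for $M>1$. Iterating $q_N\le c\,q_{N-1}+\varepsilon_N$ then forces $q_N(M)\to0$, which is the identity $\sum_n p_n(M)=1$.

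For the density reformulation, Theorem~\ref{thm:classes} says that $\{a\ge2M:(a,M)=1,\ \ordT(a/M)=n\}$ is a finite union of residue classes modulo $M^{n+1}$ (up to the finitely many $a<2M$). Its density relative to $B_M$ is therefore exactly $p_n(M)$. Hence for every $N$ the finite-order set has lower relative density at least $\sum_{n\le N}p_n(M)$, and this lower bound tends to $1$. Since the relative density is also at most $1$, it exists and equals $1$.
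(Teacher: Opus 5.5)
Your proof is correct and is essentially the paper's argument. Both use strong induction on $M$ via the normalized recurrence $p_n(M)=\sum_{d\mid M}\frac{\varphi(d)}{M}p_{n-1}(d)$, isolate the self-loop weight $\varphi(M)/M<1$, and use $\sum_{d\mid M}\varphi(d)=M$. The one difference is in how the limit is handled: the paper sums over all $n$ and solves $P(M)=\frac{\varphi(M)}{M}P(M)+\frac{M-\varphi(M)}{M}$, which needs the a priori bound $P(M)\le 1$ that it gets from disjointness of the $S_n$, whereas your tail recursion for $q_N$ only involves finite quantities. Because that recursion is an equality with nonnegative coefficients, you can iterate it exactly, which also shows $q_N\ge 0$ and so justifies your "$\le$" form of the iteration.
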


	\section{The image of a rational number}

	Let $M\geq 1$ be a fixed integer. We will consider the orbits of rational numbers of denominator $M$. We start by showing that denominators never increase and compute the result of a rational number after one iteration. This will be useful when we prove our results using induction.

	\begin{lemma}\label{lem:numerator}
		Let $M, a\geq 1$ be  integers and write $a=qM+r$ for some $0\leq r<M$. Then $\T(\frac{a}{M})= \frac{q(M+r)}{M}$		
	\end{lemma}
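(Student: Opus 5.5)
This is a direct computation from the definitions of $\floor{\cdot}$ and $\fracpart{\cdot}$; no earlier result is needed. First I identify the integer and fractional parts of $a/M$. Since $a=qM+r$ with $0\le r<M$, we have
\[
\frac aM = q+\frac rM, \qquad 0\le \frac rM<1 .
\]
By uniqueness of the decomposition of a real number into an integer plus an element of $[0,1)$, this gives $\floor{a/M}=q$ and $\fracpart{a/M}=r/M$.

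Next I substitute these into the definition $\T(x)=\floor{x}\bigl(1+\fracpart{x}\bigr)$:
\[
\T\Bigl(\frac aM\Bigr)=q\Bigl(1+\frac rM\Bigr)=\frac{q(M+r)}{M},
\]
which is the claimed formula.

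There is no real obstacle. The only point needing care is that $q$ and $r$ must be the quotient and remainder of Euclidean division, so that $0\le r<M$; this is exactly what makes $q$ the floor of $a/M$. I would also note, for later use, that the formula does not require $(a,M)=1$. Consequently $\T(a/M)$ again has denominator dividing $M$, which is the remark that denominators never increase.
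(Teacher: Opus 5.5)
Your proof is correct and follows the paper's own argument: from $a/M=q+r/M$ with $0\le r/M<1$ you read off $\floor{a/M}=q$ and $\fracpart{a/M}=r/M$, then substitute into the definition of $\T$. Your explicit appeal to the uniqueness of this decomposition is a small justification the paper leaves implicit. Your closing observation that the denominator of $\T(a/M)$ divides $M$ matches what the paper records in Lemma~\ref{lem:denominator-descent}.
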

	\begin{proof}
		For $a\in \Z$, write
		\[
		a=qM+r,\qquad 0\leq r<M.
		\]
		Then
		\[
		\frac aM=q+\frac rM,
		\]
		and therefore
		\begin{equation}\label{eq:numerator-map}
			\T\left(\frac aM\right)
			=q\left(1+\frac rM\right)
			=\frac{q(M+r)}{M}.
		\end{equation}
	\end{proof}
	
	\begin{lemma} \label{lem:denominator-descent}
		Let $M\geq 1$ and let $a\in \Z$ be such that $(a,M)=1$.  Write $a=qM+r$ with $0\leq r<M$.  If $r\neq 0$,
		then the denominator of $\T(a/M)$ (in reduced form) is
		\[
		d=\frac{M}{(q,M)}.
		\]
		In particular, denominators never increase along rational orbits.
	\end{lemma}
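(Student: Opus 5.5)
By Lemma~\ref{lem:numerator}, we have $\T(a/M)=\frac{q(M+r)}{M}$, so the task reduces to computing $\gcd\bigl(q(M+r),M\bigr)$ and showing it equals $(q,M)$. The plan is to show that $M+r$ is coprime to $M$, and then use multiplicativity of coprimality to strip the factor $M+r$ from the gcd.

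First, since $a=qM+r$, we have $r\equiv a \pmod M$. Hence $(r,M)=(a,M)=1$, and so $(M+r,M)=(r,M)=1$. It follows that $\gcd\bigl(q(M+r),M\bigr)=\gcd(q,M)$: any common divisor of $q(M+r)$ and $M$ is coprime to $M+r$, so it divides $q$, and the reverse inclusion is trivial. Writing $g=(q,M)$, the reduced form of $\T(a/M)$ is therefore
\[
\frac{(q/g)(M+r)}{M/g},
\]
and the denominator is $d=M/g$, as claimed. The hypothesis $r\neq 0$ is consistent with $(a,M)=1$ whenever $M>1$; for $M=1$ the statement is vacuous or trivial. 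The case $q=0$, where $g=M$ and $d=1$, gives the value $0$, an integer, and is also covered by the formula.

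For the ``in particular'' clause, $d=M/(q,M)$ divides $M$, so $d\le M$. If $r=0$, then $(a,M)=1$ forces $M=1$, and the value is an integer, whose denominator is $1$. Thus in all cases the reduced denominator of $\T(a/M)$ divides $M$. Induction along the orbit then shows that denominators never increase.

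The only step that requires any care is the coprimality $(M+r,M)=1$, which comes directly from $(a,M)=1$. Beyond that, the argument involves no real obstacle.
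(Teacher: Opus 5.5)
Your proof is correct and is essentially the paper's argument: the coprimality $(M+r,M)=(r,M)=1$ lets you reduce $\gcd\bigl(q(M+r),M\bigr)$ to $(q,M)$, which gives the reduced form $\frac{(q/(q,M))(M+r)}{M/(q,M)}$. Your explicit justification of the ``in particular'' clause is a small addition that the paper leaves implicit: you note that the new denominator divides $M$, and you handle the degenerate case $r=0$, $M=1$ separately.
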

	
	\begin{proof}
		Since $(a,M)=1$, the remainder $r$ and $M$ are coprime.  Hence, 
		\[
		(M+r,M)=(r,M)=1.
		\]
		By \eqref{eq:numerator-map},
		\[
		\T(a/M)=\frac{q(M+r)}{M}.
		\]
		Since $(M,M+r)=1$, the only common divisor between the numerator and $M$ comes from $q$.  Thus cancellation removes exactly $(q,M)$, yielding that  
		$$\frac{\frac{q(M+r)}{(q,M)}}{\frac{M}{(q,M)}}$$ is reduced.
	\end{proof}
	\begin{remark}
		If Conjecture~\ref{conj:global} is false, there is a rational orbit which never reaches an integer. This orbit must be strictly increasing, while
		its denominators form a non-increasing sequence of positive integers.  Hence any
		counterexample to Conjecture~\ref{conj:global} eventually has constant
		denominator.  
	More precisely, since the denominators form a non-increasing sequence 
	of positive integers, there exist $N\geq 0$ and $M>1$ such that 
	$\T^n(x)$ has denominator exactly $M$ for every $n\geq N$. Writing 
	$\T^n(x)=p_n/M$ in reduced form, we have $(p_n,M)=1$, and setting 
	$a_n=\lfloor p_n/M\rfloor$ and $r_n=p_n\bmod M$, we get
	\[
	\T^n(x)=a_n+\frac{r_n}{M},
	\qquad
	1\leq r_n<M,
	\qquad
	(r_n,M)=1,
	\]
	where the last condition follows from $(p_n,M)=1$. By 
	Lemma~\ref{lem:numerator},
	\[
	\T\left(a_n+\frac{r_n}{M}\right)=\frac{a_n(M+r_n)}{M}.
	\]
	The denominator of $\T^{n+1}(x)$ in 
	reduced form is $M/(a_n,M)$. For this to equal $M$, we need 
	$(a_n,M)=1$. Hence, for all $n\geq N$,
	\[
	(a_n,M)=(r_n,M)=1.
	\]
 
	\end{remark}
	
	\section{Describing rational numbers of a given order for a fixed denominator}
	
	For the rest of this section we fix a denominator \(M\).  We want to
	understand, for each \(n\), which reduced fractions with denominator \(M\)
	have order exactly \(n\).  Since the order will turn out to depend only on a
	congruence class of the numerator modulo $M^{n+1}$, it is convenient to introduce notation for
	those classes.
	
	Let \(M> 1\) and \(n> 0\).  
	We define $\Rset_{n,M}$ as the (possibly empty) set of integers $k$ such that $0\leq k< M^{n+1}$, $(k,M)=1$ and  for every integer \(a\geq 2M\) with \(
	a\equiv_{M^{n+1}} k,
	\)
	one has \(\ordT(a/M)=n\). Notice that, since $(k,M)=1$ and $a\equiv_{M^{n+1}} k$, then $(a,M)=1$. Hence, $a/M$ is in reduced form, and the condition $a\geq 2M$ ensures that $a/M\geq 2$, thus avoiding the interval $[1,2]$, where every number is fixed.
	
	The theorem below
	will show that \(\Rset_{n,M}\) encodes  all fractions of exact order \(n\), that is,
	for \(a\geq 2M\) and \((a,M)=1\), the value of \(\ordT(a/M)\), when equal to
	\(n\), is completely determined by the residue of \(a\) modulo \(M^{n+1}\).

	We shall write
	\[
	A(n,M)=\#\Rset_{n,M}
	\qquad(M>1,\ n\geq 1).
	\]
	For the recurrence it is useful to adopt the following  conventions:
	\[
	A(0,1)=1,
	\qquad
	A(0,M)=0\quad(M>1),
	\qquad
	A(n,1)=0\quad(n\geq 1).
	\]
	These conventions simply encode the degenerate cases.  A fraction with
	denominator \(1\) is already an integer, and therefore has order \(0\).  On the
	other hand, if \(M>1\) and \((a,M)=1\), then \(a/M\) is not an integer, so it
	cannot have order \(0\).
	
	\begin{theorem}\label{thm:classes}
		Let \(M>1\) and \(n\geq 1\).  For every integer \(a\geq 2M\) with
		\((a,M)=1\),
		\[
		\ordT(a/M)=n
		\quad\Longleftrightarrow\quad
		a\bmod M^{n+1}\in \Rset_{n,M}.
		\]
		Moreover,
		\begin{equation}\label{eq:recurrence}
				A(n,M)
			=
			\varphi(M)\sum_{d\mid M} A(n-1,d)
			\left(\frac Md\right)^{n-1}.
		\end{equation}

	\end{theorem}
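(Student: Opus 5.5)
The plan is to argue by induction on $n$, with both parts of Theorem~\ref{thm:classes} resting on the one-step formula of Lemma~\ref{lem:numerator} and the denominator computation of Lemma~\ref{lem:denominator-descent}. Fix $M>1$, $a\geq 2M$ with $(a,M)=1$, and write $a=qM+r$ with $0<r<M$ and $(r,M)=1$. Since $a/M\geq 2$ we have $q\geq 2$. Put $g=(q,M)$ and $d=M/g$. By the two lemmas,
\[
\T(a/M)=\frac{b}{d},\qquad b=\frac qg\,(M+r),\qquad (b,d)=1 .
\]
Moreover $\T(a/M)\geq q\geq 2$, so $b\geq 2d$. Because $a/M$ is not an integer, $\ordT(a/M)=1+\ordT(b/d)$. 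So $a/M$ has order $n$ exactly when $b/d$ has order $n-1$.

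\textbf{Key step: residue dependence.} The residue $a \bmod M^{n+1}$ determines $r$ and $q \bmod M^n$. It therefore determines $g$, and hence $d$, and it also determines $q/g \bmod M^n/g$. Since
\[
\frac{M^n}{g}=d^n g^{n-1},
\]
this modulus is a multiple of $d^n$. Hence $b \bmod d^n$ is determined by $a \bmod M^{n+1}$.
\begin{itemize}
\item For $n=1$: order $1$ means $\T(a/M)\in\N$, i.e. $d=1$, i.e. $M\mid q$, which depends only on $a \bmod M^2$.
\item For $n\geq 2$: order $n$ holds iff $d>1$ and $b/d$ has order $n-1$. By the induction hypothesis (or the $d=1$ convention at the base), the latter holds iff $b \bmod d^n\in\Rset_{n-1,d}$.
\end{itemize}
Consequently, whether $\ordT(a/M)=n$ depends only on $a \bmod M^{n+1}$ (among $a\geq 2M$ coprime to $M$). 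So every such $a$ of order $n$ has its residue in $\Rset_{n,M}$, and the converse holds by the definition of $\Rset_{n,M}$. This gives the equivalence.

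\textbf{Counting.} Residues $k$ mod $M^{n+1}$ coprime to $M$ correspond bijectively to pairs ($r$ a unit mod $M$, $q \bmod M^n$). This gives the factor $\varphi(M)$. Fix $r$ and a divisor $d\mid M$, with $g=M/d$. The residues $q \bmod M^n$ with $(q,M)=g$ are exactly $q=gq'$ with $q' \bmod d^n g^{n-1}$ and $(q',d)=1$. The membership condition depends only on $q' \bmod d^n$, via
\[
b\equiv q'(M+r) \pmod{d^n}.
\]
Since $(M+r,d)=1$, the map $q'\mapsto q'(M+r)$ permutes the units mod $d^n$. So exactly $A(n-1,d)$ classes of $q' \bmod d^n$ qualify, and each lifts in $g^{n-1}=(M/d)^{n-1}$ ways modulo $d^n g^{n-1}$. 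The gcd condition is preserved under lifting, since it only depends on $q' \bmod d$. Summing over $d\mid M$ gives \eqref{eq:recurrence}. The conventions $A(0,1)=1$ and $A(0,d)=0$ for $d>1$ handle $n=1$, and $A(n-1,1)=0$ for $n\geq 2$ handles the case $d=1$.

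\textbf{Main obstacle.} The main obstacle is the divisibility bookkeeping: checking that $d^n$ divides $M^n/g$ so that $b \bmod d^n$ is well defined, and that the lift count is uniform. A secondary point is ensuring the residues $q \bmod M^n$ really range freely. They do, since any residue is realized by arbitrarily large $q\geq 2$.
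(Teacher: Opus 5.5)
Your proposal is correct and follows essentially the same route as the paper. Both arguments run an induction on $n$ over all denominators simultaneously. The one-step reduction $a/M\mapsto \frac{(q/g)(M+r)}{M/g}$ together with $M^n/g=d^n g^{n-1}$ shows that the next numerator is determined modulo $d^n$. The count then comes from writing $q=gq'$, using that multiplication by $M+r$ permutes units mod $d^n$, and lifting each class in $g^{n-1}$ ways. The only cosmetic difference is that the paper handles $n=1$ by explicitly identifying $\Rset_{1,M}=\{k:1\leq k<M,\ (k,M)=1\}$, whereas you fold that case into the general residue-dependence argument via the conventions for $A(0,\cdot)$.
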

	
	\begin{proof} We prove the statement by induction on $n$. We start by proving the case $n=1$.  We claim that
		\[
		\Rset_{1,M}
		=
		\{\,k:1\leq k<M,\ (k,M)=1\,\},
		\]
		where these integers are viewed as representatives modulo \(M^2\).
		
		Let $1\leq k < M$ with $(k,M)=1$. Let $a\geq 2M$ with $a\equiv_{M^2} k$.
		Then $a=qM^{2}+k$ and $a/M=qM+k/M$, so  $$\T(a/M)=qM\left(1+\frac kM\right)=qM+k,$$
		which is an integer, so $k\in \Rset_{1,M}.$ Conversely, let $k\in \Rset_{1,M}.$  We know, by definition, that $(k,M)=1$. We only have to see that $1\leq k<M$. Write $k=QM+c$ for $0\leq c<M$. Since $(k,M)=1$, then $c\neq 0$ and $(c,M)=1$. Suppose that $Q\neq 0$. Since $k<M^2$, then $Q<M$ and so, in particular, $M\nmid Q$.
		
		Choose $a>2M$ with $a\equiv_{M^2} k$. In particular $a\equiv_M c$, so $a=qM+c$ for some $q\equiv_M Q$. Indeed $a=k+tM^2=QM+c+tM^2$, so $qM=QM+tM^2$, so $q=Q+tM$. Hence $M\nmid q$ and $\T(a/M)=\frac{q(M+c)}{M}$ is not an integer because $(M+c,M)=1$ and $M\nmid q$. Therefore $\ordT(a/M)\neq 1$, which is absurd as $k\in \Rset_{1,M}$. We deduce that $Q=0$, so $k=c$ and $1\leq k<M$.

		We have proved that
		\[
		\Rset_{1,M}=
		\{\,k:1\leq k<M,\ (k,M)=1\,\}.
		\]
		Consequently,
		\[
		A(1,M)=\#\Rset_{1,M}=\varphi(M).
		\]
		
		Moreover, for every \(a\geq 2M\) with \((a,M)=1\),
		\[
		\ordT(a/M)=1
		\quad\Longleftrightarrow\quad
		a\bmod M^2\in\Rset_{1,M}.
		\]
		Indeed, writing \(a=qM+c\), the condition \(\ordT(a/M)=1\) is equivalent to
		\(M\mid q\), which is equivalent to \(a\equiv c\pmod{M^2}\), with
		\(1\leq c<M\) and \((c,M)=1\), i.e. \(a\bmod M^2\in \Rset_{1,M}\).

	Now let \(n\geq 2\), and assume that the result has been proved for all
	smaller positive orders.  We first prove the
	equivalence
	\[
	\ordT(a/M)=n
	\quad\Longleftrightarrow\quad
	a\bmod M^{n+1}\in\Rset_{n,M}.
	\]
	
	Suppose first that
	\[
	\ordT(a/M)=n.
	\]
	Let \(b\geq 2M\) be any integer such that
	\[
	b\equiv a\pmod{M^{n+1}}.
	\]
	We will prove that
	\[
	\ordT(b/M)=n.
	\]
	This will show that \(a\bmod M^{n+1}\in\Rset_{n,M}\).
	
	Write
	\[
	a=qM+c,
	\qquad
	b=q'M +c',
	\qquad
	0\leq c,c'<M.
	\]
	Since
	\(
	b\equiv_{M^{n+1}} a,
	\)
	we have in particular \(b\equiv a\pmod M\).  Hence
	\(
	c'=c.
	\)
	Also, since \((a,M)=1\), we have
	\[
	1\leq c<M,
	\qquad
	(c,M)=1.
	\]
	Moreover,
	\(
	b-a=(q'-q)M
	\)
	is divisible by \(M^{n+1}\).  Therefore
	\(
	q'\equiv_{M^n} q.
	\)
	
	Let
	\(
	h=(q,M).
	\)
	Since \(q'\equiv q\pmod M\), we also have
	\(
	(q',M)=h.
	\)
	Put
	\(
	N=\frac Mh.
	\)
	By Lemma~\ref{lem:denominator-descent}, 
	\[
	\T(a/M)=\frac{(q/h)(M+c)}{N}
	\qquad \text{ and } \qquad
	\T(b/M)=\frac{(q'/h)(M+c)}{N}
	\]
	are reduced.
	Since \(\ordT(a/M)=n\geq 2\), the first iterate of \(a/M\) is not an integer, that is, $	N>1$.	
	Now
	\(
	q'\equiv_{M^n} q.
	\)
	Dividing by \(h\), we get
	\[
	\frac{q'}h\equiv \frac qh\pmod{\frac{M^n}{h}}.
	\]
	Since
	\[
	\frac{M^n}{h}=NM^{n-1}
	\]
	and \(N\mid M\), we have
	\(
	N^n\mid NM^{n-1}.
	\)
	Hence
	\[
	\frac{q'}h\equiv \frac qh\pmod{N^n}.
	\]
	Multiplying by \(M+c\), we get
	\[
	\frac{q'}h(M+c)\equiv \frac qh(M+c)\pmod{N^n}.
	\]
	
	Since
	\(
	\ordT(a/M)=n,
	\)
	we have
	\(
	\ordT(\T(a/M))=n-1,
	\)
	that is,
	\[
	\ordT\left(\frac{(q/h)(M+c)}{N}\right)=n-1.
	\]
	
	We have that $(q/h)(M+c)$ and $N$ are coprime because $N\mid M$ and $(c,M)=1$,  and that $(q/h)(M+c)>2N$ (because $\T(a/M)>a/M\geq 2$), so, by the induction hypothesis, $(q/h)(M+c)\bmod N^n\in \Rset_{n-1,N}$. Since ${(q'/h)(M+c)}\equiv_{N^n} (q/h)(M+c)$, then, by definition of $\Rset_{n-1,N}$, 
	we have that 
	$\ordT(\T(b/M))=\ordT\left(\frac{(q'/h)(M+c)}{N}\right)=n-1$, that is, $\ordT(b/M)=n.$
		Thus every \(b\geq 2M\) with \(b\equiv a\pmod{M^{n+1}}\) has order \(n\).  Hence
	\[
	a\bmod M^{n+1}\in\Rset_{n,M}.
	\]

	Conversely, suppose that
	\(
	a\bmod M^{n+1}\in\Rset_{n,M}.
	\)
	By the definition of \(\Rset_{n,M}\), every integer \(b\geq 2M\) satisfying
	\(	b\equiv_{M^{n+1}} a
	\)
	has order \(n\).  In particular, taking \(b=a\), we get
	\(
	\ordT(a/M)=n.
	\)
	This proves the equivalence.
 
		It remains to prove the recurrence for $A(n,M)$.
		
		Each $k\in\Rset_{n,M}$ satisfies $0\leq k<M^{n+1}$ and $(k,M)=1$. Write
		\[
		k = qM+c, \qquad 0\leq c<M,\quad 0\leq q<M^n.
		\]
		Since $(k,M)=1$ we have $(c,M)=1$, giving $\varphi(M)$ admissible values 
		of $c$. We count, for each fixed $c$, the values of $q\in\{0,\ldots,M^n-1\}$ 
		for which $k\in\Rset_{n,M}$.
		
		By definitiomn, $k\in\Rset_{n,M}$ if and only if 
		$\ordT(a/M)=n$ for every $a\geq 2M$ with $a\equiv_{M^{n+1}} k$. Fix 
		any such $a$, and write $a=q_aM+c$ with $q_a\equiv_{M^n} q$. Let 
		$h=(q,M)=(q_a,M)$. Let $N=M/h$. 
		By Lemma~\ref{lem:denominator-descent},
		\[
		\T(a/M)=\frac{(q_a/h)(M+c)}{N}
		\]
		is reduced. Since $h=(q_a,M)$ and $N=M/h$, we have $(q_a/h,N)=1$. Since 
		$N\mid M$ and $(M+c,M)=(c,M)=1$, we have $(M+c,N)=1$. Hence 
		$(q_a/h)(M+c)$ and $N$ are coprime. Since $a/M$ is not an integer and 
		$a/M\geq 2$, we have $\T(a/M)>a/M\geq 2$, so $(q_a/h)(M+c)\geq 2N$.
		
		Since $q_a\equiv_{M^n} q$, dividing by $h$ gives $q_a/h\equiv_{M^n/h} q/h$. 
		Using $M=Nh$ we get $M^n/h=N^nh^{n-1}$, and since $N^n\mid N^nh^{n-1}$,
		\[
		\frac{q_a}{h}\equiv_{N^n}\frac{q}{h},
		\qquad\text{hence}\qquad
		\frac{q_a}{h}(M+c)\equiv_{N^n}\frac{q}{h}(M+c).
		\]
		Since $n\geq 2$, the condition $\ordT(a/M)=n$ is equivalent to 
		$\ordT(\T(a/M))=n-1$. But we have proved that
		\[
		\ordT\!\left(\frac{(q_a/h)(M+c)}{N}\right)=n-1
		\;\Longleftrightarrow\;
		\frac{q_a}{h}(M+c)\bmod N^n\in\Rset_{n-1,N}.
		\]
		Since $(q_a/h)(M+c)\equiv_{N^n}(q/h)(M+c)$, this condition is equivalent to
		\[
		\frac{q}{h}(M+c)\bmod N^n\in\Rset_{n-1,N},
		\]
		which depends only on $c$ and $q$, not on the choice of $a$. Since the 
		induction hypothesis gives an equivalence, every $q$ satisfying this 
		condition yields a $k=qM+c\in\Rset_{n,M}$, and every element of 
		$\Rset_{n,M}$ arises this way.
		
		We now count, for each $h\mid M$, the number of $q\in\{0,\ldots,M^n-1\}$ 
		with $(q,M)=h$ satisfying the condition. Write $q=hq'$ with $(q',N)=1$ 
		and $0\leq q'<M^n/h=N^nh^{n-1}$. The condition becomes 
		$q'(M+c)\bmod N^n\in\Rset_{n-1,N}$. Since $(M+c,N)=1$, multiplication 
		by $(M+c)$ is a bijection on $\Z_{N^n}^\times$, so the number of 
		residues $q'\bmod N^n$ with $(q',N)=1$ satisfying this condition is 
		exactly $A(n-1,N)$. Since $q'$ ranges over $\{0,\ldots,N^nh^{n-1}-1\}$, 
		it hits each residue class modulo $N^n$ exactly $h^{n-1}$ times, so the 
		total number of valid $q$ with $(q,M)=h$ is
		\[
		h^{n-1}\cdot A(n-1,N)=h^{n-1}\cdot A\!\left(n-1,\frac{M}{h}\right).
		\]
		Summing over all $h\mid M$ and substituting $d=M/h$ yields
		\[
		A(n,M)=\varphi(M)\sum_{h\mid M}h^{n-1}A\!\left(n-1,\frac{M}{h}\right)
		=\varphi(M)\sum_{d\mid M}A(n-1,d)\left(\frac{M}{d}\right)^{n-1},
		\]
		which completes the proof.
		\end{proof}

	As a corollary, we obtain a closed formula for the case where the denominator is a prime power.
	\begin{corollary} \label{a prime}
		Let $p$ be prime.  For every $n\geq 1$, and $s\geq 1$,
		\[
		A(n,p^s)=\binom{n+s-2}{s-1}\varphi(p^s)^n.
		\]
		In particular,
		\[
		A(n,p)=(p-1)^n.
		\]
	\end{corollary}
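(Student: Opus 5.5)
We will derive the formula directly from the recurrence \eqref{eq:recurrence} in Theorem~\ref{thm:classes}, by induction on $n$ for all $s\geq 1$ at once. The divisors of $p^s$ are $d=p^j$ with $0\leq j\leq s$. By the convention $A(n-1,1)=0$ for $n\geq 2$, the term $j=0$ drops out. So for $n\geq 2$ the recurrence reads
\[
A(n,p^s)=\varphi(p^s)\sum_{j=1}^{s}A(n-1,p^j)\,p^{(s-j)(n-1)}.
\]

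The base case $n=1$ is already contained in the proof of Theorem~\ref{thm:classes}, namely $A(1,M)=\varphi(M)$. This matches $\binom{s-1}{s-1}\varphi(p^s)^1$. It can also be checked from the recurrence with $A(0,1)=1$, $A(0,p^j)=0$ for $j\geq 1$: only $d=1$ survives, giving $\varphi(p^s)\cdot 1\cdot (p^s)^0$.

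For the inductive step, assume $A(n-1,p^j)=\binom{n+j-3}{j-1}\varphi(p^j)^{n-1}$ for all $j\geq 1$. The key observation is that
\[
\varphi(p^j)^{n-1}p^{(s-j)(n-1)}=\bigl(p^{j-1}(p-1)p^{s-j}\bigr)^{n-1}=\varphi(p^s)^{n-1}.
\]
So every summand carries the same power factor, and we get
\[
A(n,p^s)=\varphi(p^s)^n\sum_{j=1}^{s}\binom{n+j-3}{j-1}.
\]
It then remains to evaluate this sum with the hockey-stick identity. Substituting $i=j-1$ gives $\sum_{i=0}^{s-1}\binom{(n-2)+i}{i}=\binom{n-2+s}{s-1}$, which is exactly the claimed coefficient.

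The special case $s=1$ gives $\binom{n-1}{0}(p-1)^n=(p-1)^n$. The only step needing care is the collapse of the power factors, which is the identity displayed above, and the correct handling of the $d=1$ term via the conventions. The binomial sum is the standard parallel-summation identity, so no genuine obstacle is expected.
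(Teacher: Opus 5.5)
Your proposal is correct and follows essentially the same route as the paper: induction on $n$ for all $s$ at once via the recurrence \eqref{eq:recurrence}, collapsing $\varphi(p^j)^{n-1}p^{(s-j)(n-1)}$ to $\varphi(p^s)^{n-1}$, and finishing with the hockey-stick identity. You also state explicitly that the $d=1$ term vanishes by the convention $A(n-1,1)=0$, a step the paper uses without comment.
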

	
	\begin{proof}
		%For $s=1$, \eqref{eq:recurrence} yields that
		%\[
		%A(n,p)=\varphi(p)A(n-1,p)
		%\]
		%for $n\geq 2$, while $A(1,p)=\varphi(p)$.  Thus $A(n,p)=(p-1)^n$. Also, 
		We will prove the result by induction on $n$. 
		For $n=1$, we have from \eqref{eq:recurrence} that $A(1,p^s)=\varphi(p^s)=\binom{s-1}{s-1}\varphi(p^s),$ for all $s\geq 1$.
		
	 Let $n\geq 2$ and assume that, for smaller $n$, the result holds for any $s\geq 1$.  
		
		Since the divisors of  $p^s$ are $1,p,\dots,p^s$, we have, by \eqref{eq:recurrence}, that 
				\[
		A(n,p^s)=\varphi(p^s)
		\sum_{j=1}^s A(n-1,p^j)p^{(s-j)(n-1)}.
		\]
		By the induction hypothesis, we have that $A(n-1,p^j)=\binom{n+j-3}{j-1}\varphi(p^j)^{n-1}.$

		Since
		\[
		\varphi(p^s)\varphi(p^j)^{n-1}p^{(s-j)(n-1)}
		=\varphi(p^s)^n,
		\]
		the claim reduces to the hockey-stick identity
		\[
		\sum_{j=1}^s \binom{n+j-3}{j-1}
		=\binom{n+s-2}{s-1}.
		\]
	\end{proof}

	We record two explicit consequences of Theorem~\ref{thm:classes}.
	Given $m\in\N$, let $v_2(m)$ denote the exponent of $2$ in the prime factorization of $m$.
	
	\begin{proposition} 
		Let $a\geq 4$ be an odd integer.  Then, for every $n\geq 1$,
		\[
		\ordT(a/2)=n
		\quad\Longleftrightarrow\quad
		a\equiv 3+2^n\pmod{2^{n+1}}.
		\]
		In particular, $\ordT(a/2)=v_2(a-3)$.
	\end{proposition}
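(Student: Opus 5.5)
Since $M=2$ is prime, Corollary~\ref{a prime} gives $A(n,2)=1$ for every $n\geq 1$. By Theorem~\ref{thm:classes}, $\Rset_{n,2}$ therefore consists of exactly one odd residue class modulo $2^{n+1}$. The task reduces to identifying that class as $3+2^n \bmod 2^{n+1}$. I will do this by induction on $n$, following the one-step map of Lemma~\ref{lem:numerator}.

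For $a$ odd, write $a=2q+1$. Lemma~\ref{lem:numerator} gives $\T(a/2)=q\cdot 3/2=3q/2$. If $q$ is even this is an integer, so the order is $1$. This happens exactly when $a\equiv 1\pmod 4$, i.e.\ $a\equiv 3+2\pmod 4$, which is the case $n=1$. If $q$ is odd, then $\T(a/2)=3q/2$ with $3q$ odd and $3q\geq 4$ (since $a\geq 4$ odd and $q$ odd force $q\geq 3$), so the denominator stays $2$. Then $\ordT(a/2)=1+\ordT(3q/2)$.

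By the induction hypothesis, for $n\geq 2$ we get $\ordT(a/2)=n$ if and only if $q$ is odd and $3q\equiv 3+2^{n-1}\pmod{2^n}$. Multiplying by the inverse of $3$ modulo $2^n$, this says $q\equiv 1+3^{-1}2^{n-1}\equiv 1+2^{n-1}\pmod{2^n}$, because $3^{-1}$ is odd and so $3^{-1}2^{n-1}\equiv 2^{n-1}\pmod{2^n}$. This congruence automatically makes $q$ odd when $n\geq 2$. Translating back with $a=2q+1$ gives $a\equiv 3+2^n\pmod{2^{n+1}}$, as required.

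The final claim $\ordT(a/2)=v_2(a-3)$ follows directly. The congruence $a\equiv 3+2^n\pmod{2^{n+1}}$ says exactly that $2^n\,\|\,(a-3)$. For odd $a\geq 5$ there is a unique such $n\geq 1$, so the order is finite and equals $v_2(a-3)$. The case $a=3$ is excluded from the statement since $a\geq 4$. The only step needing care is the divisibility bookkeeping in the inductive step. In particular, the hypothesis $3q\geq 4$ is what lets the induction hypothesis apply, and it holds because $q\geq 3$.
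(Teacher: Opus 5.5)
Your proof is correct, but it takes a different route from the paper's. The paper uses Corollary~\ref{a prime} and Theorem~\ref{thm:classes} to know in advance that $\Rset_{n,2}$ is a single residue class modulo $2^{n+1}$. It then only needs one witness, $a=3+2^n$, whose orbit is checked by a direct induction to reach an integer for the first time at step $n$. Explicitly, $\T^j(a/2)=(3+3^j2^{n-j})/2$, which is the $M=2$ case of Proposition~\ref{prop:denom-two}. You instead prove the equivalence for every odd $a\geq 4$ at once, by induction on $n$. You use the one-step reduction $a=2q+1\mapsto 3q$ and solve $3q\equiv 3+2^{n-1}\pmod{2^n}$, using that $3$ is invertible modulo $2^n$. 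This gives both directions directly and never uses $A(n,2)=1$ or Theorem~\ref{thm:classes}, so your opening paragraph is superfluous; your induction in fact reproves that $\Rset_{n,2}$ is a single class. The paper's argument is shorter given the general machinery. Yours is elementary and self-contained: it is essentially the $M=2$ case of the inductive step in the proof of Theorem~\ref{thm:classes}, with the residue class computed explicitly rather than merely counted.
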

	
	\begin{proof}
		By Corollary \ref{a prime}, $A(n,2)=(2-1)^n=1$,
		so $\Rset_{n,2}$ consists of exactly one residue class modulo
		$2^{n+1}$.  By Theorem~\ref{thm:classes}, it suffices to identify
		this class.  Taking $a=3+2^n$, we have $a\geq 4$, $(a,2)=1$, and
		by Lemma~\ref{lem:numerator} one verifies directly by induction that
		$\T^n(a/2)$ is an integer while $\T^j(a/2)$ is not for $j<n$.
		Hence the unique class in $\Rset_{n,2}$ is $3+2^n\bmod 2^{n+1}$,
		giving the stated equivalence.  The formula $\ordT(a/2)=v_2(a-3)$
		follows since $a\equiv 3+2^n\pmod{2^{n+1}}$ is equivalent to
		$v_2(a-3)=n$.
	\end{proof}
	
	\begin{proposition}\label{prop:denom-two}
		Let $M\geq 2$ and $n\geq 1$.  Then $a=M+1+M^n(M-1)$ satisfies
		$(a,M)=1$ and $\ordT(a/M)=n$.  Consequently, for every fixed
		denominator $M\geq 2$, every positive integer occurs as the order
		of some reduced fraction with denominator $M$.
	\end{proposition}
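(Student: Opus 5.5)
The proof goes by induction on $n$. Alongside the induction I use Theorem~\ref{thm:classes}, which says the order $n-1$ depends only on the residue of the numerator modulo $M^{n}$. Set $a_n=M+1+M^n(M-1)$.

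\textbf{Preliminary checks.} Since $a_n\equiv 1\pmod M$, we have $(a_n,M)=1$. Since $M^n(M-1)\geq M-1$, we have $a_n\geq 2M$, so $a_n/M$ lies in the range covered by Theorem~\ref{thm:classes}.

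\textbf{Writing $a_n$ in the form of Lemma~\ref{lem:numerator}.} We have $a_n=q_nM+1$ with $q_n=1+M^{n-1}(M-1)$ and remainder $r=1$. The lemma then gives $\T(a_n/M)=q_n(M+1)/M$.

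\textbf{Base case $n=1$.} Here $q_1=M$, so $\T(a_1/M)=M+1$ is an integer. Since $a_1/M$ is not an integer, $\ordT(a_1/M)=1$.

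\textbf{Inductive step, $n\geq 2$.} Now $q_n\equiv 1\pmod M$, so $(q_n,M)=1$. By Lemma~\ref{lem:denominator-descent}, $\T(a_n/M)$ has reduced denominator exactly $M$ and numerator $a'=q_n(M+1)$. Expanding with $M^{n-1}(M^2-1)=M^n(M-1)+M^{n-1}(M-1)$ gives
\[
a'=M+1+M^{n-1}(M-1)+M^n(M-1)=a_{n-1}+M^n(M-1),
\]
so $a'\equiv a_{n-1}\pmod{M^n}$.

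By the induction hypothesis $\ordT(a_{n-1}/M)=n-1$, and $a_{n-1}\geq 2M$, $(a_{n-1},M)=1$. Theorem~\ref{thm:classes} then gives $a_{n-1}\bmod M^n\in\Rset_{n-1,M}$. Because $a'\geq 2M$ (indeed $a'>a_{n-1}$) and $a'\equiv a_{n-1}\pmod{M^n}$, the definition of $\Rset_{n-1,M}$ yields $\ordT(a'/M)=n-1$. Since $a_n/M$ itself is not an integer, $\ordT(a_n/M)=1+(n-1)=n$.

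\textbf{Conclusion and main difficulty.} The "consequently" part follows at once: $a_n/M$ is a reduced fraction with denominator $M$ and order $n$, for every $n\geq 1$. The only real obstacle is noticing that the image numerator is not exactly $a_{n-1}$ but only congruent to it modulo $M^n$. This is precisely why Theorem~\ref{thm:classes} is needed rather than a direct computation.
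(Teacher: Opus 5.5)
Your argument is correct, but it takes a different route from the paper. The paper does not use Theorem~\ref{thm:classes} here. With $d=M+1$, it shows by induction on $j$, using only Lemma~\ref{lem:numerator}, that $\T^j(a/M)=\bigl(d+d^jM^{n-j}(M-1)\bigr)/M$ for $0\leq j\leq n$. That numerator is $\equiv 1\pmod M$ for $j<n$ and $\equiv 1+(M-1)\equiv 0\pmod M$ for $j=n$, which gives the order directly.

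You instead compare the first iterate with the next member $a_{n-1}$ of the family. You find agreement only modulo $M^n$ and absorb the discrepancy with the residue-class invariance of Theorem~\ref{thm:classes}. All the hypotheses you need for that step ($n-1\geq 1$, $a'\geq 2M$, $a'\equiv a_{n-1}\pmod{M^n}$) are checked. Your version avoids guessing a closed form for every iterate and nicely illustrates how the theorem is used. However, it rests on the full strength of that theorem. The paper's proof is self-contained, serves as an independent check of Theorem~\ref{thm:classes}, and exhibits the whole orbit and its final integer explicitly.

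For the same reason, your closing remark that the theorem is needed ``rather than a direct computation'' is not accurate. Your $a'=d+dM^{n-1}(M-1)$ is exactly the paper's first iterate. If you widen the induction to numerators $d+tM^m(M-1)$ with $t\equiv 1\pmod M$, the direct computation goes through. This family is mapped into itself by $\T$, with $m\mapsto m-1$ and $t\mapsto dt$.
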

	
	\begin{proof}
	Since $a\equiv 1\pmod{M}$, we have $(a,M)=1$.  Set $d=M+1$ and
	$a_0=a=d+M^n(M-1)$.  We claim that for $0\leq j\leq n$,
	\[
	\T^j(a_0/M)=\frac{a_j}{M},
	\qquad
	a_j=d+d^jM^{n-j}(M-1).
	\]
	The case $j=0$ is clear.  For $0\leq j<n$, since $d\equiv 1\pmod{M}$
	we have $a_j\equiv 1\pmod{M}$, so writing $a_j=qM+1$ with
	$q=1+d^jM^{n-j-1}(M-1)$, Lemma~\ref{lem:numerator} gives
	\[
	\T(a_j/M)=\frac{q(M+1)}{M}.
	\]
	Since $q(M+1)=d(1+d^jM^{n-j-1}(M-1))=d+d^{j+1}M^{n-j-1}(M-1)$, we verify the formula at step $j+1$.  For $j<n$, $a_j\equiv
	1\pmod{M}$ so $a_j/M$ is not an integer.  At $j=n$, since
	$d\equiv 1\pmod{M}$ we have $d^n\equiv 1\pmod{M}$, so
	\[
	a_n=d+d^n(M-1)\equiv 1+(M-1)=M\equiv 0\pmod{M},
	\]
	hence $a_n/M$ is an integer and $\ordT(a_0/M)=n$.
	\end{proof}
	
	 For a given denominator $M$ and positive integer $n$, we know that there are some numerators $a$ such that $\ordT(a/M)=n$. We pose the question of whether the smallest such $a$ can be described.
	\begin{question}
		Let $M\geq 2$ and define
		\[
		\mu(M,n)=\min\{a\geq 2M:(a,M)=1,\ \ordT(a/M)=n\}.
		\]
		Can we describe $\mu(M,n)$?
	\end{question}
	\section{Density of finite order rational numbers}
	The goal of this section is to prove that, for a given denominator $M$, the set of numerators $a$ such that $(a,M)=1$ and $a\geq 2M$ has density $1$ among all natural numbers that are coprime to $M$. We start by introducing the basic definitions.

	\begin{definition}
		Let $A\subseteq \Z_{>0}$. The \emph{natural density} of $A$ is
		\[
		\delta(A)=\lim_{N\to\infty}\frac{\#\{a\in A: a\leq N\}}{N},
		\]
		if this limit exists. 
	\end{definition}
	\begin{definition}
		Let $B\subseteq \Z_{>0}$ have positive natural density, and let 
		$A\subseteq B$. The \emph{natural density of $A$ relative to $B$} is
		\[
		\delta_B(A)=\lim_{N\to\infty}
		\frac{\#\{a\in A: a\leq N\}}{\#\{b\in B: b\leq N\}},
		\]
		if this limit exists.  
	\end{definition}
	
In our setting, the ambient set is $B_M=\{a\in\Z_{>0}: (a,M)=1\}$. 
%Since the residue classes coprime to $M$ are equidistributed, 
%$\delta(B_M)=\varphi(M)/M$, and among the first $N$ integers, 
%$\#\{a\leq N: (a,M)=1\}\sim \varphi(M)N/M$ as $N\to\infty$. We measure 
%density relative to $B_M$.

\begin{corollary}\label{cor:density}
	Let $M>1$, $n\geq 1$, and let
	\[
	S_n=\{a\geq 2M: (a,M)=1,\ \ordT(a/M)=n\}.
	\]
	Then $S_n$ has natural density relative to $B_M$ equal to
	\[
	\delta_{B_M}(S_n)=\frac{A(n,M)}{\varphi(M^{n+1})}.
	\]
\end{corollary}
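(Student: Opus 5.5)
By Theorem~\ref{thm:classes}, $S_n$ is a union of residue classes modulo $M^{n+1}$, truncated below at $2M$. Precisely,
\[
S_n=\{a\geq 2M:\ a\bmod M^{n+1}\in\Rset_{n,M}\}.
\]
Every $k\in\Rset_{n,M}$ satisfies $(k,M)=1$, so every such class lies inside $B_M$. The plan is to count $S_n$ and $B_M$ up to $N$ separately and take the ratio.

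\emph{Step 1: counting $S_n$.} For a fixed residue $k$ modulo $Q:=M^{n+1}$, the number of $a\leq N$ with $a\equiv k\pmod Q$ is $N/Q+O(1)$. Discarding the finitely many $a<2M$ changes this only by $O(1)$. Summing over the $A(n,M)$ classes of $\Rset_{n,M}$ gives
\[
\#\{a\in S_n: a\leq N\}=\frac{A(n,M)}{M^{n+1}}N+O(1),
\]
where the implied constant depends only on $M$ and $n$.

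\emph{Step 2: counting $B_M$.} $B_M$ is the union of the $\varphi(M)$ classes modulo $M$ coprime to $M$. The same argument gives
\[
\#\{b\in B_M: b\leq N\}=\frac{\varphi(M)}{M}N+O(1).
\]
In particular $B_M$ has positive natural density $\varphi(M)/M$, so $\delta_{B_M}$ is defined.

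\emph{Step 3: taking the limit.} Dividing the two counts and letting $N\to\infty$ yields
\[
\delta_{B_M}(S_n)=\frac{A(n,M)/M^{n+1}}{\varphi(M)/M}=\frac{A(n,M)}{M^n\varphi(M)}.
\]
The last step is the identity $\varphi(M^{n+1})=M^n\varphi(M)$. It holds because a positive integer is coprime to $M^{n+1}$ exactly when it is coprime to $M$, and coprimality to $M$ is periodic modulo $M$. Each of the $M^n$ blocks of length $M$ in $[0,M^{n+1})$ therefore contains exactly $\varphi(M)$ such residues. Substituting gives the stated value $A(n,M)/\varphi(M^{n+1})$.

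The only point needing care is the first equality: $S_n$ must be exactly the union of these truncated classes. Both inclusions come from the equivalence in Theorem~\ref{thm:classes}, which applies to every $a\geq 2M$ with $(a,M)=1$. Everything else is routine counting in arithmetic progressions, so I expect no real obstacle.
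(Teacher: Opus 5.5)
Your proposal is correct and follows essentially the same route as the paper: you use Theorem~\ref{thm:classes} to identify $S_n$ with the $A(n,M)$ classes of $\Rset_{n,M}$ modulo $M^{n+1}$, count these and the $\varphi(M)$ coprime classes modulo $M$ up to $N$ (you write $O(1)$ error terms where the paper writes explicit sandwich bounds and applies the squeeze theorem), and take the ratio using $\varphi(M^{n+1})=M^n\varphi(M)$. Your explicit handling of the truncation at $2M$ is slightly more careful than the paper's block count, which tacitly ignores the finitely many $a<2M$ in the first block.
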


\begin{proof}
	By Theorem~\ref{thm:classes}, $a\in S_n$ if and only if 
	$a\bmod M^{n+1}\in\Rset_{n,M}$. We compute the limit directly. For 
	any $N>0$,
	\[
	\#\{a\in S_n: a\leq N\}
	=\#\{a\leq N: (a,M)=1,\ a\bmod M^{n+1}\in\Rset_{n,M}\}.
	\]
	Now, $\Rset_{n,M}$ consists of $A(n,M)$ residue classes modulo 
	$M^{n+1}$, so between $iM^{n+1}+1$ and $(i+1)M^{n+1}$ there are exactly $A(n,M)$ integers belonging to $S_n$. Hence,
	\[
	 A(n,M)\cdot\left(\frac{N}{M^{n+1}}-1\right)\leq \#\{a\in S_n: a\leq N\}\leq A(n,M)\cdot\left(1+\frac{N}{M^{n+1}}\right).
	\]
	Similarly, between $iM+1$ and $(i+1)M$, there are exactly $\varphi(M)$ integers coprime to $M$, so 
	\[
	\varphi(M)\left(\frac{N}M-1\right)\leq \#\{a\in B_M: a\leq N\}\leq \varphi(M)\left(\frac{N}M+1\right).
	\]
	
	Therefore, 
	$$\frac{A(n,M)\cdot\left(\frac{N}{M^{n+1}}-1\right)}{\varphi(M)\left(\frac{N}M+1\right)}\leq \frac{\#\{a\in S_n:a\leq N\}}{\#\{a\in B_M:a\leq N\}}\leq \frac{A(n,M)\cdot\left(1+\frac{N}{M^{n+1}}\right)}{	\varphi(M)\left(\frac{N}M-1\right)}$$
	
As $N\to\infty$, both bounds tend to 
$\dfrac{A(n,M)/M^{n+1}}{\varphi(M)/M}=\dfrac{A(n,M)}{M^n\varphi(M)}
=\dfrac{A(n,M)}{\varphi(M^{n+1})}$,
so by the squeeze theorem the limit exists and equals the claimed value.
 
\end{proof}
 
	\begin{theorem}\label{thm:density-one}
		For every $M\geq 1$,
		\[
		\sum_{n= 0}^{+\infty}\frac{A(n,M)}{\varphi(M^{n+1})}=1.
		\]
		Equivalently, among the integers $a\geq 2M$ with $(a,M)=1$, the set of $a$ for
		which $\ordT(a/M)<\infty$ has relative natural density \(1\) inside \(B_M=\{a\geq1:(a,M)=1\}\).
	\end{theorem}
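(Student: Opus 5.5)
The plan is to normalise the counts into probabilities and show that the ``tail'' probability of having order larger than $n$ satisfies a contracting recurrence. For $M\geq 1$ and $n\geq 0$ put
\[
D(n,M)=\frac{A(n,M)}{\varphi(M^{n+1})}=\frac{A(n,M)}{M^{n}\varphi(M)}.
\]
Using $\varphi(d^{n})=d^{n-1}\varphi(d)$, the recurrence \eqref{eq:recurrence} of Theorem~\ref{thm:classes} becomes
\[
D(n,M)=\frac{1}{M}\sum_{d\mid M}\varphi(d)\,D(n-1,d)\qquad(M>1,\ n\geq 1).
\]
I will first check that this also holds at $n=1$ with the conventions $A(0,1)=1$ and $A(0,d)=0$ for $d>1$. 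In that case only $d=1$ contributes, giving $D(1,M)=1/M=\varphi(M)/\varphi(M^2)$, which agrees with $A(1,M)=\varphi(M)$. The case $M=1$ of the theorem is trivial, since the sum is $D(0,1)=1$.

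Next set $P(n,M)=\sum_{k=0}^{n}D(k,M)$ and $T(n,M)=1-P(n,M)$, so that $T(0,1)=0$ and $T(0,M)=1$ for $M>1$. For $M>1$, I sum the recurrence over $k=1,\dots,n$. Since $D(0,M)=0$, this gives $P(n,M)=\frac1M\sum_{d\mid M}\varphi(d)P(n-1,d)$. Because $\sum_{d\mid M}\varphi(d)=M$, it follows that
\[
T(n,M)=\frac{1}{M}\sum_{d\mid M}\varphi(d)\,T(n-1,d).
\]
An easy induction from this identity shows $T(n,M)\geq 0$. Alternatively, nonnegativity follows directly from Theorem~\ref{thm:classes}: the sets $\Rset_{k,M}$ give disjoint sets of residue classes, so $P(n,M)\le 1$.

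The key step, and the main obstacle, is showing that $T(n,M)\to 0$ as $n\to\infty$. I will do this by strong induction on $M$ over the divisor lattice. The base case is $T(n,1)=0$ for all $n$. For $M>1$, assume $T(n,d)\to 0$ for every proper divisor $d$ of $M$. Isolating the term $d=M$ in the recurrence gives
\[
T(n,M)=\rho\,T(n-1,M)+\varepsilon_n,\qquad \rho=\frac{\varphi(M)}{M}<1,\qquad \varepsilon_n=\frac1M\sum_{d\mid M,\,d<M}\varphi(d)T(n-1,d)\to 0.
\]
Iterating yields $T(n,M)\le \rho^{n}+\sum_{j\le n}\rho^{n-j}\varepsilon_j$. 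This bound tends to $0$ by the usual argument that a geometric kernel convolved with a null sequence is null: split the sum at $n/2$. Hence $\sum_{n\ge 0}D(n,M)=1$.

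For the density statement, let $F=\{a\geq 2M:(a,M)=1,\ \ordT(a/M)<\infty\}$. It contains the disjoint union $S_1\cup\dots\cup S_n$ for each $n$. By Corollary~\ref{cor:density}, the lower relative density of $F$ in $B_M$ is therefore at least $\sum_{k=1}^{n}D(k,M)=P(n,M)$. Trivially the upper relative density is at most $1$. Letting $n\to\infty$ shows $\delta_{B_M}(F)=1$. The finitely many $a<2M$ do not affect the density.
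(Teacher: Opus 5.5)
Your proof is correct. It closes the induction on $M$ differently from the paper.

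The paper works with the full series $P(M)=\sum_{n\ge 0}A(n,M)/\varphi(M^{n+1})$. It first gets the a priori bound $P(M)\le 1$ from the density interpretation: the $S_n$ are disjoint, so their relative densities sum to at most $1$. It then interchanges the order of summation to get the fixed-point identity $P(M)=\frac1M\sum_{d\mid M}\varphi(d)P(d)$. Using $P(d)=1$ for proper divisors, it solves $\bigl(1-\varphi(M)/M\bigr)P(M)=1-\varphi(M)/M$. The a priori finiteness is what makes this cancellation legitimate, since $P(M)=\infty$ also satisfies the identity.

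You stay at the finite level instead. The tails $T(n,M)$ satisfy the same averaging recurrence, and you get nonnegativity algebraically. Isolating the term with factor $\rho=\varphi(M)/M<1$ turns the paper's fixed-point equation into a contracting recurrence, whose forcing term tends to $0$ by the induction hypothesis.

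This costs one extra analytic step, namely that a geometric kernel convolved with a null sequence is null. In return you avoid rearranging infinite series and need no a priori bound. The argument could also be made quantitative: iterating your estimate gives geometric decay of $T(n,M)$, exactly $((p-1)/p)^n$ when $M=p$ is prime.

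You also spell out how the density statement follows from the series identity, via finite additivity of relative density over $S_1,\dots,S_n$ and a lower-density bound; the paper only asserts this with ``Equivalently''. One small point: Corollary~\ref{cor:density} is stated only for $M>1$, so dismiss the case $M=1$ of the density claim separately. There every $a\geq 2$ is an integer and has order $0$.
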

	
	\begin{proof}
		Let
		\[
		P(M)=\sum_{n= 0}^{+\infty} \frac{A(n,M)}{\varphi(M^{n+1})}.
		\]
	Since each integer $a$ has at most one order, the sets $S_n=\{a\geq 2M: 
	(a,M)=1,\,\ordT(a/M)=n\}$ are pairwise disjoint. By 
	Corollary~\ref{cor:density}, the natural density of $S_n$ within the 
	integers coprime to $M$ is $A(n,M)/\varphi(M^{n+1})$, and since 
	densities of disjoint sets sum to at most $1$, we have $P(M)\leq 1$.  We prove $P(M)=1$ by induction on $M$.
		
		For $M=1$, we have $A(0,1)=1$ and $A(n,1)=0$ for all $n\geq 1$, so $P(1)=1$.
		Assume $M>1$ and $P(d)=1$ for every proper divisor $d$ of $M$.  Since
		\[
		\varphi(M^{n+1})=M^n\varphi(M),
		\]
		Theorem~\ref{thm:classes} gives
		\begin{align*}
			P(M)
			&=\sum_{n= 1}^{+\infty}
			\frac{\varphi(M)}{M^n\varphi(M)}
			\sum_{d\mid M}A(n-1,d)\left(\frac Md\right)^{n-1} \\
			&=\frac1M\sum_{d\mid M}\sum_{n= 0}^{+\infty}\frac{A(n,d)}{d^n} \\
			&=\frac1M\sum_{d\mid M}\varphi(d)P(d).
		\end{align*}
		Separating the term $d=M$ yields
		\[
		P(M)=\frac{\varphi(M)}M P(M)
		+\frac1M\sum_{\substack{d\mid M\\ d<M}}\varphi(d)P(d).
		\]
		By the induction hypothesis, this becomes
		\[
		P(M)=\frac{\varphi(M)}M P(M)
		+\frac1M\sum_{\substack{d\mid M\\ d<M}}\varphi(d).
		\]
		Using $\sum_{d\mid M}\varphi(d)=M$, we get
		\[
		\left(1-\frac{\varphi(M)}M\right)P(M)
		=1-\frac{\varphi(M)}M.
		\]
		Since $M>1$, this implies $P(M)=1$.
	\end{proof}

As noted in \cite{ACM}, the previous theorem yields the following corllary immediately:
\begin{corollary}
There is no infinite arithmetic progression in $\Q$ whose elements
have infinite order.
\end{corollary}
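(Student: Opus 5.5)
The plan is to argue by contradiction, using Theorem~\ref{thm:density-one} together with Corollary~\ref{cor:density}. Suppose an infinite arithmetic progression of rationals $x_k=\alpha+k\beta$ ($k\geq 0$, with $\beta>0$ rational) has every element of infinite order. Since $\beta\neq 0$, the terms eventually exceed any bound, so after discarding finitely many terms we may assume $x_k\geq 2$ for all $k$. We also assume $\beta>0$; a progression with $\beta<0$ leaves $\Q_{\geq 2}$ after finitely many terms, so only finitely many of its elements lie in the domain $\Q_{\geq 2}$ where $\ordT$ is defined, and the statement concerns the forward infinite part.

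First step: put everything over a common denominator. Write $\alpha=u/L$ and $\beta=v/L$ with $L\geq 1$ and $v>0$. Then $x_k=(u+kv)/L$, and the numerators $u+kv$ form an integer arithmetic progression with common difference $v$. The reduced denominators of these fractions vary with $k$. To fix one denominator, write $g_k=(u+kv,L)$. This gcd depends only on $k$ modulo $L$, since $u+kv \bmod L$ is periodic in $k$.

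Second step: pick a residue class $k\equiv k_0\pmod L$ and let $g=g_{k_0}$ and $M=L/g$. For $k=k_0+tL$, the reduced form of $x_k$ is $a_t/M$ with $a_t=(u+k_0v)/g+t\,vL/g$. This is an integer progression in $t$ with difference $D=vL/g$, and every term is coprime to $M$. If $M=1$, each $x_k$ is an integer and has order $0$, a contradiction. So assume $M>1$. By hypothesis every $a_t$ (for $t$ large enough that $a_t\geq 2M$) has infinite order.

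Key step, and the main point to get right: an infinite arithmetic progression inside $B_M$ has positive relative density in $B_M$. Its counting function up to $N$ is about $N/D$, while $\#\{b\in B_M:b\leq N\}\sim \varphi(M)N/M$. The set of numerators $a$ with $\ordT(a/M)=\infty$ has relative density $0$: by Theorem~\ref{thm:density-one} and Corollary~\ref{cor:density}, for each $K$ the union $S_1\cup\dots\cup S_K$ has relative density $\sum_{n\leq K}A(n,M)/\varphi(M^{n+1})$, which tends to $1$. Note that $S_0$ is empty for $M>1$, consistent with the convention $A(0,M)=0$ and the $n=0$ term of the sum. Hence the upper relative density of the infinite-order set is at most $1-\sum_{n\leq K}$ for every $K$, and is therefore $0$. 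This contradicts the lower bound $M/(D\varphi(M))>0$ for the progression, which finishes the proof.
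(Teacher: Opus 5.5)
Your proof is correct and follows the route the paper intends. The paper only says the corollary follows immediately from Theorem~\ref{thm:density-one}, and you supply the omitted details: you split the progression according to $k \bmod L$ into subprogressions with constant reduced denominator $M$, and you use Corollary~\ref{cor:density} with a tail argument to show that the infinite-order numerators have upper relative density $0$ in $B_M$, which is incompatible with the positive relative density $1/(v\varphi(M))$ of a progression with difference $vM$.
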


 \section*{Acknowledgements}
 The author is grateful to Bruno Giordano for having introduced Conjecture \ref{conj:global} to the author, and to António Machiavelo for some discussions about \cite{ACM}.
  The  author was supported by national funds through the Funda\c c\~ao
  para a Ci\^encia e a Tecnologia, FCT, under the project UID/04674/2025.

\end{document}